\theoremstyle{definition}
\newtheorem{dfn}{Definition}
\theoremstyle{plain}
\newtheorem{thm}{Theorem}
\newtheorem{lmm}{Lemma}
\theoremstyle{remark}
\newtheorem{rem}{Remark}
\newcommand{\Z}{\mathbb{Z}}
\newcommand{\E}{\mathbb{E}}
\newcommand{\R}{\mathbb{R}}
\newcommand{\e}{\mathrm{e}}
\newcommand{\lb}{\lbrace}
\newcommand{\rb}{\rbrace}
\renewcommand{\d}{\mathrm{d}}
\newcommand{\p}{\partial}
\newcommand{\bH}{\mathbf{H}}
\newcommand{\bsigma}{\bm{\sigma}}
\newcommand{\ba}{\mathbf{a}}
\newcommand{\bb}{\mathbf{b}}
\newcommand{\brho}{\bm{\rho}}
\newcommand{\bg}{\mathbf{g}}
\begin{document}
\title[Parameter Estimation for mmfOU by GMM]{Parameter Estimation for multi-mixed Fractional Ornstein--Uhlenbeck Processes by Generalized Method of Moments}

\date{\today}

\author[Almani]{Hamidreza Maleki Almani}
\address{School of Technology and Innovations, University of Vaasa, P.O. Box 700, FIN-65101 Vaasa, FINLAND}
\email{hmaleki@uwasa.fi}

\author[Sottinen]{Tommi Sottinen}
\address{School of Technology and Innovations, University of Vaasa, P.O. Box 700, FIN-65101 Vaasa, FINLAND}
\email{tommi.sottinen@uwasa.fi}


\begin{abstract}
We develope the generalized method of moments (GMM) estimation for the parameters of the finitely mixed multi-mixed fractional Ornstein--Uhlenbeck (mmfOU) processes, and analyze the consistency and assymptotic normality of this estimator. We also include some simulations and provide numerical observations considering different statistical errors.
\end{abstract}

\keywords{generalized method of moments,
fractional Langevin equation, 
multi-mixed fractional Ornstein--Uhlenbeck process,
parameter estimation}

\subjclass[2020]{60G10, 60G15, 60G22, 62M09}  

\maketitle


\section{Introduction and Preliminaries}\label{sect:introduction}
%
Langevin \cite{Langevin-1908} postulated the model
\begin{equation}\label{eq: Langevin}
	\frac{\d U_t}{\d t} = -\frac{f}{m}U_t + \frac{F_t}{m},
\end{equation}
to the velocity $U$ of a free particle with mass $m>0$, moving in a liquid by the alternating force $F$ of surrounding molecules, and a constant friction coefficient $f$. By imposing probabilistic hypotheses to the force $F$, Ornstein and Uhlenbeck \cite{Ornstein-Uhlenbeck-1930} derived that the solution of \eqref{eq: Langevin} has a Gaussian distribution with exponential mean. Later on, considering the initial variable $U_0$ to be central Gaussian and independent from $F$, Doob \cite{Doob-1942} showed that the solution to \eqref{eq: Langevin} is stationary and for some constant $c>0$, the time-changed scaled process  $t^{1/2}\,U(c\log t)$ is in fact the well-known Einstein's \cite{Einstein-BrownianMotion-1905} Brownian motion for $t\geq 0$. As the Brownian motion in nowhere differentiable, Doob understood \eqref{eq: Langevin} as
\begin{equation}\label{eq: Doob_Langevin}
	\d U_t = -\lambda U_t\d t + \d X_t,
\end{equation}
where $U_0=\xi$ is the initial variable and $X$ is a L\'evy process. He derived the solution
\begin{equation}\label{eq: OU_Levy}
	U_t = \e^{-\lambda t}\xi +\int_{0}^t \e^{-\lambda(t-s)}\, \d X_s,
\end{equation}
which is called the Ornstein--Uhlenbeck (OU) process. This solution was later extent for all cases that $X$ is a semimartingale (see e.g. \cite{Protter-1992}). Lamperti \cite{Lamperti-1962} proved the processes $V_t, t\geq 0$ is $H$-selfsimilar if and only if $U_t = e^{-\lambda t}V(c\exp(\lambda t/H))$ is stationary for all $\lambda , c>0$. 
Cheridito and et al.  \cite{Cheridito-Kawaguchi-Maejima-2003} applied Lamperti transform and verified the solution \eqref{eq: OU_Levy} is valid for $X=B^H$, where $B^H$ is the fractional Brownian mtoion (fBm) introduced by Mandelbrot and Van Ness \cite{Mandelbrot-Van-Ness-1968}.

In \cite{almani_sottinen_2023} the Langevin equation \eqref{eq: Doob_Langevin} with $X = \sum_{i=1}^\infty \sigma_i B^{H_i}$, the multi-mixed fractional Brownian motion (mmfBm), is the dynamic of particle's velocity when the liquid is not purely homogeneous, i.e. the local surrounding molecules can have imposing forces of different roughnesses. That is
\begin{equation}\label{eq: Almani_Langevin}
	\d U_t = -\lambda U_t\d t + \d\left(\sum_{i=1}^\infty \sigma_i B^{H_i}_t\right).
\end{equation}
They confirmed the solution \eqref{eq: OU_Levy} for this model is possible if for all $i\ge 1$, $H_i\in [H_{\inf},H_{\sup}]\subset (0,1)$ and $\sum_{i=1}^\infty\sigma_i^2<\infty$. The solution is
\begin{equation}\label{eq: mmfOU}
	U_t = \e^{-\lambda t}\xi +\int_{0}^t \e^{-\lambda(t-s)}\, \d\left(\sum_{i=1}^\infty \sigma_i B^{H_i}_t\right).
\end{equation}
When $U_0=\xi=\sum_i \sigma_i\xi_i$ is the initial condition, this solution can also be written as
\begin{equation}\label{eq: plausibility}
	U_t = \sum_{i=1}^\infty\sigma_i U^{\lambda,H_i}_t,
\end{equation}
where
\begin{equation}\label{eq: Cheridito_Langevin}
	\d U^{\lambda,H_i}_t = -\lambda U^{\lambda,H_i}_t \d t + \d B^{H_i}_t,
\end{equation}
and $U^{\lambda,H_i}_0 =\xi_i$. To approximate $U$, one can naturally consider
\begin{equation}\label{eq: mmfOU_n}
	U^n_t = \sum_{i=1}^n\sigma_i U^{\lambda,H_i}_t,
\end{equation}
that is convergent to $U$ in $L^2(\Omega\times[0,T])$, for any $T\geq 0$ (see \cite{almani_sottinen_2023}). To apply the process \eqref{eq: mmfOU_n} for modelling the real data, we need $2n+1$ real-valued parameters
$$\lambda;\, H_1,\ldots,H_n;\,\sigma_1,\ldots,\sigma_n$$
to be statistically estimated. 
To do this, we use the generalized method of moments (GMM) introduced in Barbiza and Viens \cite{Barboza-Viens-2017}.\\

The outline of this paper is as follows. In Section~\ref{sect:estimation} we develop the GMM estimator for the process \eqref{eq: mmfOU_n} analytically. Next, in Section~\ref{sect:consistency}, we investigate the consistency of the introduced estimator. In Section~\ref{sect:normality} we prove the asymptotic normality of our estimator. Finally, in Section~\ref{sect:simulation}, we provide some simulations.
\section{Generalized Methdod of Moments Estimation}\label{sect:estimation}

%
The parameters we aim to estimate are
\begin{equation*}
\lambda>0,\qquad 
H_k \in [H_{\inf}, H_{\sup}] \subset (0,1), \qquad
\sigma_k>0; \qquad
k=1,2,\ldots n,
\end{equation*}
i.e. the following vector will be estimated
\begin{align*}
&\theta=(\lambda ;\bH ;\bsigma)'\in\R^{2n+1}\\
&\mbox{where}\\
&\bH=(H_1,\ldots,H_n),\\
&\bsigma=(\sigma_1,\ldots,\sigma_n).
\end{align*}
To develope GMM estimator, the autocovariance function of $U^n$ is required. It is given in \cite{almani_sottinen_2023} by
\begin{eqnarray}\label{eq: rho_analitical}
&&\rho_\theta(t) = \rho_{\lambda,n}(t) = \E[U^n_{s}U^n_{s+t}]\\
&=&
\sum_{k=1}^n\sigma_k^2\frac{\Gamma(1+2H_k)}{4}\frac{\e^{-\lambda t}}{\lambda^{2H_k}}\bigg\lb 1+{\gamma}_{2H_k-1}(\lambda t)+\e^{2\lambda t}{\Gamma}_{2H_k-1}(\lambda t)\bigg\rb\notag\\
&=&  
\sum_{k=1}^n\sigma_k^2\frac{\Gamma(2H_k+1)}{2\lambda^{2H_k}}\,\bigg\{\cosh(\lambda t) 
-
\frac{1}{\Gamma(2H_k)}\int_0^{\lambda t}s^{2H_k-1}\cosh(\lambda t-s)\, ds\bigg\},\notag 
\end{eqnarray}
where for $\alpha\in (-1,0)\cup (0,1)$
\begin{eqnarray*}
	{\gamma}_\alpha(x) &=& \frac{1}{\Gamma(\alpha)}\int_0^x s^{\alpha-1}\e^{s}\, \d s,\label{NLG} \\ 
	{\Gamma}_\alpha(x) &=& \frac{1}{\Gamma(\alpha)}\int_x^\infty s^{\alpha-1}\e^{-s}\, \d s,\label{NUG} 
\end{eqnarray*}	
and ${\gamma}_0(x)=1$, ${\Gamma}_0(x)=0$. Here ${\gamma}_\alpha$, ${\Gamma}_\alpha$ are the normalized incomplete gamma functions (see \cite{abramowitz1988handbook}). The formula \eqref{eq: rho_analitical} is continuous in $t$, and so $U^n$ is ergodic (see \cite{grenander1950stochastic}, page 257). By Lemma 5.2 in \cite{hu2010parameter} for sufficiently small $t\searrow 0$ we have
\begin{equation}\label{eq: rho_practical}
   \rho_\theta(t)\approx\sum_{k=1}^n\sigma_k^2\left(\frac{H\Gamma(2H)}{\lambda^{2H}} - \frac{t^{2H}}{2}\right).
\end{equation}
 Further, for our estimation, we need some filters associated to the parameter $\theta$ and a positive integers $L\ge 2n+1$. We define them as following.
\begin{dfn}
A filter of length $L+1$ and order $l$ is a sequence of real numbers $\ba=(a_k)_{k=0}^L=(a_0,\ldots,a_L)$, that for $l>0$
\begin{align*}
&\sum_{q=0}^L a_q q^r=0\,;\quad 0\le r\le l-1,\ r\in\Z,\\
&\sum_{q=0}^L a_q q^l\ne 0.
\end{align*}
When $l=0$, we put $a_0=1$ and $a_q = 0$ for $0 < q \le L$.
\end{dfn}

The GMM estimation requires a family of filters by the length $L$ and orders $l=0,1,\ldots,L$. For each filter $\ba$ with order $l$ define
\begin{equation*}
\bb=(b_0,\ldots,b_L)',
\end{equation*}
where
\begin{align*}
&b_0:=\sum_{q=0}^L a_q^2,\\
&b_k:=2\sum_{q=0}^{L-k} a_{q+k}a_q\,;\quad k=1,\ldots,L.
\end{align*}

For different orders $l_i\ne l_j$, the corresponding vectors $\bb_i$ and $\bb_j$ are linearly independent.  So, we can take $L$ filters $\ba_1,\ldots,\ba_L$ with respective orders $l_1,\ldots,l_L$ such that the $L \times (L+1)$ matrix $B=[\bb_1\ \bb_2\ \cdots\ \bb_L]'$ satisfies $2n+1\leq \mbox{rank}(B)\leq L$. 
Then, for a fixed step size $\alpha>0$, and $\theta\in\Theta$ (a closed subset of $\R^{2n+1}$), we can define the function
\begin{equation*}
V(\theta):=\sum_{k=0}^L b_k\rho_\theta (\alpha k)=\sum_{k=0}^L b_k\rho_{\lambda,n} (\alpha k),
\end{equation*}
and the filtered process of the step size $\alpha>0$ for $t\ge 0$ 
\begin{equation*}
\varphi(t):=\sum_{q=0}^L a_qU^n_{t-q\alpha}.
\end{equation*}

Next, let $\theta_0$ be the exact value of the parameter. Due to the stationarity of $U^n$, the random variable $\varphi(t)$ is centered Gaussian with variance $\E[\varphi(t)^2] = V (\theta_0)$.
Following the notation of \cite{newey1994large}, for any $\theta\in\Theta$ we have the vector of differences
\begin{equation*}
\bg(t,\theta):=\left(g_1(t,\theta),\ldots,g_L(t,\theta)\right)',
\end{equation*}
where
\begin{equation*}
g_\ell(t,\theta)=\varphi_\ell(t)^2-V_\ell(\theta),\quad 1\leq \ell\leq L,
\end{equation*}
and the subscript $\ell$ means that we use the filter $\ba_\ell$ in the
filtered process and variance. Moreover, $\bg(t,\theta)$ satisfies a {\it population moment condition} (see \cite{hall2005generalized}
and \cite{newey1994large})
\begin{equation*}
\E[\bg(t,\theta_0)]=0,\quad t\ge 0.
\end{equation*}
Also, the vector $\bg$ is in the  second Wiener chaos (see \cite{nourdin2012normal}).\\

Further, assume that we have observed the process $U^n$ at discrete equidistant times $0=t_0<t_1<\cdots <t_{N-1}<t_{N}=T$ with fixed time step $\alpha = t_i-t_{i-1}$. Let $A$ be an $L\times L$ symmetric positive-definite matrix (a suitably chosen $A$ ensures that the GMM estimate is efficient, see \cite{Barboza-Viens-2017} Section
2.3). Now, for $\theta\in\Theta$ and time $t \ge 0$ denote\\
\begin{enumerate}
\item[] $\bg_0(\theta):=\E[\bg(t,\theta)]$ (vector of expected differences),
\item[] $\hat{\bg}_N(\theta):=\frac{1}{N-L+1}\sum_{i=L}^N\bg(t_i,\theta)$ (vector of sampled differences),
\item[] $Q_0(\theta):=\bg_0(\theta)'A\bg_0(\theta)$ (squared distance of expected differences),
\item[] $\hat{Q}_N(\theta):=\hat{\bg}_N(\theta)'A\hat{\bg}_N(\theta)$ (sampled version of the above distance).\\
\end{enumerate}
Note that $\bg_0(\theta)$ does not depend on time $t$, due to the stationarity 
of $U^n$. Finally, we define the GMM estimator
of $\theta_0=(\lambda_0;\bH_0;\bsigma_0^2)$ as
\begin{equation}\label{estimator}
\hat{\theta}_N=(\hat{\lambda}_N;\hat{\bH}_N;\hat{\bsigma}_N)'=\arg\min_{\theta\in\Theta}\hat{Q}_N(\theta).
\end{equation}
In fact, if the function $Q_0(\theta)$ attains a unique zero at $\theta_0$, then we can find a value $\hat{\theta}_N$ such that the approximation of $Q_0$, that is $\hat{Q}_N$, should be the smallest possible (see \cite{hall2005generalized,hansen1982large}).\\

Moreover, all the remaining results are still valid if we substitute
the fixed matrix $A$ with a sequence of random matrices (perhaps depending
on the data) with a deterministically bounded eigenstructure. In particular, we
can choose such sequence in order to attain convergence in probability to the
efficient alternative of $A$ (see \cite{Barboza-Viens-2017}, Section 2.3).\\

In the following we will show the consistency and asymptotic normality of the GMM estimator \eqref{estimator}.

\section{Consistency}\label{sect:consistency}

As mentioned in \cite{Barboza-Viens-2017}, for the consistency, it is sufficient to find conditions under which the mapping $\theta\mapsto\brho_{\theta,2n+1}(\alpha)$ is smooth and invertible at least in a closed subset $\theta\in\Theta\subseteq\mathbb{R}^{2n+1}$, where
$$
\brho_{\theta,2n+1}(\alpha)=(\rho_\theta(0\cdot\alpha),\ldots,\rho_\theta((2n)\cdot\alpha))',
$$
and $\alpha>0$ is the step size.
This is equivalent for $\nabla_\theta\brho_{\theta,2n+1}(\alpha)$ to be non-singular in a closed subset $\theta\in\Theta\subseteq\mathbb{R}^{2n+1}$.
To show this for the mmfOU processes, with 
$\theta=(\lambda ;\bH ;\bsigma)'\in\R^{2n+1}$,
we have the next lemma and theorem.
\begin{lmm}\label{lmm: clarified}
In the closed rectangle
\begin{equation}
	\Upsilon_3 = \Bigg\lb \theta=(\lambda,H,\sigma)'\,\Bigg\vert\,
	\begin{matrix}
		\lambda,\sigma>0,\,
		H\in (0,1),\\
		\lambda<\exp\big(\Psi(2H+1)\big)
	\end{matrix}
	\Bigg\rb\subset\Theta\subseteq\R^3,
\end{equation}
where $\Psi(\cdot)$ is the so called digamma function
\[\Psi(z)=\frac{\d}{\d z}\log\Gamma(z)=\frac{\Gamma'(z)}{\Gamma(z)},\]
for $\rho_\theta(t) = \sigma^2\rho_{\lambda,H}(t)$, the matrix
\begin{equation*}
	\nabla_\theta\brho_{\theta,3}(\alpha) = 
	\left[
	\begin{array}{lll}
		\frac{\p\rho_\theta}{\p\lambda}(0) & \frac{\p\rho_\theta}{\p H}(0) & \frac{\p\rho_\theta}{\p\sigma}(0)\\
		& & \\
		\frac{\p\rho_\theta}{\p\lambda}(\alpha) & \frac{\p\rho_\theta}{\p H}(\alpha) & \frac{\p\rho_\theta}{\p\sigma}(\alpha)\\
		& & \\
		\frac{\p\rho_\theta}{\p\lambda}(2\alpha) & \frac{\p\rho_\theta}{\p H}(2\alpha) & \frac{\p\rho_\theta}{\p\sigma}(2\alpha)\\
	\end{array}
	\right]
\end{equation*}
is a P-matrix for sufficiently small $\alpha$.
\end{lmm}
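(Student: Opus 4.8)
The plan is to reduce the P-matrix property — positivity of every principal minor of $\nabla_\theta\brho_{\theta,3}(\alpha)$ — to a controlled short-time expansion of the single-component autocovariance $\rho_{\lambda,H}(t)$ about $t=0$, then read off the three parameter-gradient columns and estimate each minor as $\alpha\searrow 0$. The first step is to sharpen the approximation \eqref{eq: rho_practical}: starting from the exact expression \eqref{eq: rho_analitical} and expanding $\cosh$ and the incomplete-gamma integral, one separates the analytic powers $t^{2j}$ produced by $\cosh(\lambda t)$ from the fractional powers $t^{2H+2j}$ produced by the integral term, obtaining
\[
\rho_{\lambda,H}(t)=\frac{\Gamma(2H+1)}{2\lambda^{2H}}-\frac{t^{2H}}{2}+\frac{\Gamma(2H+1)\lambda^{2-2H}}{4}\,t^2+o(t^2).
\]
Differentiating in $\lambda,H,\sigma$ (recall $\rho_\theta=\sigma^2\rho_{\lambda,H}$) gives, at $t=0$,
\[
\frac{\partial\rho_\theta}{\partial\lambda}(0)=-\frac{H\,\Gamma(2H+1)}{\lambda^{2H+1}}\,\sigma^2,\quad
\frac{\partial\rho_\theta}{\partial H}(0)=\sigma^2\lambda^{-2H}\Gamma(2H+1)\big[\Psi(2H+1)-\ln\lambda\big],\quad
\frac{\partial\rho_\theta}{\partial\sigma}(0)=\frac{\Gamma(2H+1)}{\lambda^{2H}}\,\sigma .
\]
This is exactly where the rectangle $\Upsilon_3$ enters: the constraint $\lambda<\exp(\Psi(2H+1))$ is equivalent to $\Psi(2H+1)-\ln\lambda>0$, which is precisely what pins down the sign of the $H$-derivative.

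The main structural difficulty is that at $\alpha=0$ all three rows of $\nabla_\theta\brho_{\theta,3}(\alpha)$ collapse onto the common row of $t=0$ derivatives, so the matrix is singular in the limit and no leading-order argument can succeed by itself; one must extract the precise rates at which the rows separate. To do this I would subtract the first row from the other two — which leaves the determinant unchanged — and track the resulting increments. From the expansion above, the $H$- and $\sigma$-columns separate at rate $\alpha^{2H}$ (with an extra $\ln\alpha$ in the $H$-column, since $\partial_H t^{2H}=2t^{2H}\ln t$), whereas the $\lambda$-column separates only at rate $\alpha^2$, because the fractional term $-t^{2H}/2$ is $\lambda$-independent and the first $\lambda$-dependent correction is the analytic $t^2$ term. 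As $H\in(0,1)$ forces $2H<2$, the fractional scale dominates the analytic one, and this ordering of competing scales is the crux of the whole computation.

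Carrying out the determinant by cofactor expansion along the order-one first row, the dominant contribution pairs $\frac{\partial\rho_\theta}{\partial\lambda}(0)$ with the $2\times2$ block formed by the $H$- and $\sigma$-increments at times $\alpha,2\alpha$; in that block the leading $\ln\alpha$ terms cancel (they differ only by the constant $\ln 2$), leaving a clean leading term, so that
\[
\det\nabla_\theta\brho_{\theta,3}(\alpha)\ \sim\ \frac{H\,\Gamma(2H+1)\,2^{2H}\ln 2}{\lambda^{2H+1}}\,\sigma^5\,\alpha^{4H}\ >\ 0,\qquad \alpha\searrow 0,
\]
the competing cofactors being subleading, of order $\alpha^{2H+2}$ up to logarithms. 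This already delivers the non-singularity which, as noted before the lemma, is what the consistency argument needs. For the full P-matrix statement one treats each remaining principal minor by the same expansion-and-reduction scheme, reading off its leading power of $\alpha$ (and of $\ln\alpha$) and its sign; the diagonal directions are oriented so that the three gradients $\frac{\partial\rho_\theta}{\partial\lambda}(0),\frac{\partial\rho_\theta}{\partial H}(0),\frac{\partial\rho_\theta}{\partial\sigma}(0)$ contribute positively, the $H$-entry relying on the digamma bound defining $\Upsilon_3$. The delicate point throughout is the bookkeeping of the three scales $\alpha^{2H}$, $\alpha^{2H}\ln\alpha$ and $\alpha^2$ together with the cancellation of logarithms; once these are controlled, taking $\alpha$ small enough forces every principal minor to its limiting sign, which yields the P-matrix property and hence, via a Gale–Nikaido univalence argument, the global invertibility of $\theta\mapsto\brho_{\theta,3}(\alpha)$ on $\Upsilon_3$.
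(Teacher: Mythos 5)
Your short-time expansion of $\rho_{\lambda,H}$, the three gradient formulas at $t=0$, and the determinant asymptotics are all correct: I confirm that
\[
\det\nabla_\theta\brho_{\theta,3}(\alpha)\ \sim\ \frac{H\,\Gamma(2H+1)\,2^{2H}\ln 2}{\lambda^{2H+1}}\,\sigma^5\,\alpha^{4H},
\]
with the competing cofactors of order $\alpha^{2H+2}$ up to logarithms. (The paper itself offers nothing to compare against: its proof is a one-line citation of Lemma B.2 in Barboza--Viens, whose appendix argument is of the same expansion type you use.) The genuine gap is your final step, the claim that every principal minor can be forced to a positive limit because ``the diagonal directions are oriented so that the three gradients contribute positively.'' This is contradicted by your own formulas. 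The $(1,1)$ entry is itself a principal minor, and you computed $\frac{\partial\rho_\theta}{\partial\lambda}(0)=-H\Gamma(2H+1)\sigma^2/\lambda^{2H+1}<0$: the stationary variance $\rho_\theta(0)=\sigma^2\Gamma(2H+1)/(2\lambda^{2H})$ is strictly decreasing in $\lambda$, and the digamma constraint defining $\Upsilon_3$ has no bearing on this sign. Hence for no $\alpha$, however small, is the displayed matrix a P-matrix in the standard sense (all principal minors positive). The defect is not isolated: running your own expansion-and-reduction scheme on the $2\times2$ principal minors gives, at leading order, the $\{1,2\}$ minor $\sim \frac{\partial\rho_\theta}{\partial\lambda}(0)\cdot\sigma^2\alpha^{2H}|\ln\alpha|<0$ and the $\{2,3\}$ minor $\sim -\frac{\sigma^3\Gamma(2H+1)}{\lambda^{2H}}(2^{2H}-1)\,\alpha^{2H}|\ln\alpha|<0$, while the $\{1,3\}$ minor and the determinant are positive. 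This sign pattern cannot be repaired by reordering the parameters or by flipping signs of columns: positivity of the $\{2\}$ and $\{3\}$ minors forces $d_2=d_3=+1$ in any diagonal sign change, which is incompatible with the $\{2,3\}$ minor being negative; and any ordering places some $\partial_\lambda\rho_\theta(k\alpha)\approx\frac{\partial\rho_\theta}{\partial\lambda}(0)<0$ on the diagonal.

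So what your argument actually establishes is that $\nabla_\theta\brho_{\theta,3}(\alpha)$ is non-singular on $\Upsilon_3$ for small $\alpha$ --- the property the discussion preceding the lemma says is needed --- but not the P-matrix property that the lemma asserts and that the proof of Theorem~\ref{thm: cncty} later consumes through Gale--Nikaido's sign-reversal characterization. Carried out honestly, your computation shows that the lemma as literally stated is false under the standard definition of a P-matrix; a correct completion would have to either (i) settle for the determinant asymptotics (non-singularity, hence only local injectivity) and supply a separate argument for global injectivity of $\theta\mapsto\brho_{\theta,3}(\alpha)$, or (ii) restate the lemma for a suitably sign-normalized map --- and your minors show that even reparametrizing by $(-\lambda,H,\sigma)$ does not produce a P-matrix. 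The obstruction is structural (monotonicity of the stationary variance in $\lambda$), not an artifact of your expansion, and it should be flagged rather than absorbed into an ``each remaining minor is treated the same way'' remark.
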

\begin{proof}
	Proof is similar to Lemma B.2 in \cite{Barboza-Viens-2017}.
\end{proof}
\begin{thm}\label{thm: cncty}
	In the closed rectangle
\begin{equation}
	\Upsilon_{2n+1} = \Bigg\lb \theta=(\lambda ;\bH ;\bsigma)'\,\Bigg\vert\,
	\begin{matrix}
	\lambda,\sigma_k>0,\,
	H_k\in (0,1),\\
	\lambda<\exp\big(\Psi(2H_{\inf}+1)\big)
\end{matrix}
	\Bigg\rb\subset\Theta\subseteq\R^{2n+1},
\end{equation}
	the mapping
	\begin{equation}\label{eq: Mapping}
\brho_{\theta,2n+1}(\alpha)=(\rho_\theta(0\cdot\alpha),\ldots,\rho_\theta((2n)\cdot\alpha))',
	\end{equation}
	is injective for sufficiently small $\alpha$.
\end{thm}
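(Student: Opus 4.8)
The plan is to derive injectivity from a Gale--Nikaido global univalence theorem, exactly as in the single-component case of Lemma~\ref{lmm: clarified}: on a closed rectangular region, a continuously differentiable map whose Jacobian is everywhere a P-matrix is injective. Thus the whole problem reduces to showing that the $(2n+1)\times(2n+1)$ Jacobian $\nabla_\theta\brho_{\theta,2n+1}(\alpha)$ is a P-matrix at every $\theta\in\Upsilon_{2n+1}$, for all sufficiently small $\alpha>0$; this is the higher-dimensional analogue of Lemma~\ref{lmm: clarified}. I note at the outset that injectivity forces the $H_k$ to be pairwise distinct (if $H_i=H_j$ the terms $\sigma_i^2 t^{2H_i}$ and $\sigma_j^2 t^{2H_j}$ merge and $\sigma_i,\sigma_j$ cannot be separated), so the effective domain is the sub-region $H_1<\cdots<H_n$, on which the argument is carried out.

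To control the Jacobian as $\alpha\searrow 0$, I would first expand $\rho_\theta$ near $t=0$. Writing the bracket in \eqref{eq: rho_analitical} as a power series in $\lambda t$ gives, for small $t$,
\[
\rho_\theta(0)-\rho_\theta(j\alpha)=\frac12\sum_{k=1}^n\sigma_k^2(j\alpha)^{2H_k}-\frac{(j\alpha)^2}{4}\sum_{k=1}^n\sigma_k^2\Gamma(2H_k+1)\lambda^{2-2H_k}+\cdots,
\]
the leading fractional powers $t^{2H_k}$ carrying the $\sigma_k$ and the integer power $t^2$ carrying $\lambda$. Differentiating, the $t$-dependent parts of the columns behave like
\[
\p_{\sigma_k}\rho_\theta(j\alpha)\sim-\sigma_k\alpha^{2H_k}j^{2H_k},\qquad \p_{H_k}\rho_\theta(j\alpha)\sim-\sigma_k^2\alpha^{2H_k}(\ln\alpha+\ln j)\,j^{2H_k},
\]
while the $\lambda$-column carries no $t^{2H_k}$ contribution and its first $t$-dependent term is of order $\alpha^2 j^2$; the remaining, $t$-independent parts of all columns are the derivatives of the variance $\rho_\theta(0)$. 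Subtracting the $j=0$ row from the rows $j=1,\dots,2n$ removes these constants.

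After this row reduction I would factor the appropriate power of $\alpha$ out of each column and perform the column operations that subtract $\sigma_k(\ln\alpha)$ times the $\sigma_k$-column from the $H_k$-column, which replaces the dominant part of the $H_k$-column (proportional to $j^{2H_k}$, hence parallel to the $\sigma_k$-column) by $j^{2H_k}\ln j$. Because $2H_k<2$, the $\lambda$-column is asymptotically the smallest among the reduced rows, so the leading term of the determinant is $\p_\lambda\rho_\theta(0)$ times the $2n\times 2n$ confluent generalized Vandermonde determinant built from the functions $\{x^{2H_k},\,x^{2H_k}\ln x\}_{k=1}^n$ at the nodes $x=1,\dots,2n$. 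Since the exponents $2H_1,\dots,2H_n$ are distinct, $\{x^{2H_k},x^{2H_k}\ln x\}$ forms an extended Chebyshev (Haar) system on $(0,\infty)$ — the confluent limit of an ordinary M\"untz system — so this determinant does not vanish, giving non-singularity of the Jacobian for all small $\alpha$.

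It remains to upgrade non-singularity to the full P-matrix property, and this is where I expect the main difficulty and where the constraint $\lambda<\exp(\Psi(2H_{\inf}+1))$ enters. Running the same asymptotic reduction on an arbitrary principal minor expresses its leading coefficient as a product of variance derivatives and a smaller confluent Vandermonde factor; the hypothesis guarantees
\[
\frac{\p}{\p H}\Big(\frac{\Gamma(2H+1)}{\lambda^{2H}}\Big)=\frac{2\Gamma(2H+1)}{\lambda^{2H}}\big(\Psi(2H+1)-\ln\lambda\big)>0,
\]
hence $\p_{H_k}\rho_\theta(0)>0$, and together with $\p_{\sigma_k}\rho_\theta(0)=\sigma_k\Gamma(2H_k+1)\lambda^{-2H_k}>0$ this pins down the signs of the variance-derivative factors. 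The hard part will be to show that these signs combine with the (sign-definite) confluent Vandermonde factors so that every principal minor is positive, uniformly over $\Upsilon_{2n+1}$ and over all small $\alpha$ — that is, to verify the P-matrix property rather than merely $\det\neq 0$. Once this sign bookkeeping is complete, Gale--Nikaido yields the claimed injectivity.
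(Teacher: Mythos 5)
Your proposal stops short of a proof precisely at the theorem's crux. What you actually sketch is non-singularity of the full Jacobian $\nabla_\theta\brho_{\theta,2n+1}(\alpha)$ for small $\alpha$, via the leading confluent-Vandermonde/M\"untz term after row and column reduction; but the Gale--Nikaido univalence theorem on a rectangle requires the Jacobian to be a P-matrix (\emph{all} principal minors positive), and you explicitly defer that ``sign bookkeeping'' as ``the hard part''. Deferring it is not a harmless omission: the principal minors that keep the $H_k$-column but drop the $\sigma_k$-column for some $k$ do not inherit the paired structure $\{x^{2H_k},\,x^{2H_k}\ln x\}$ on which your Haar-system argument rests (the column operation trading $j^{2H_k}\ln\alpha$ for $j^{2H_k}\ln j$ is unavailable there), so it is not evident that the leading term of each minor is even sign-definite, let alone positive, uniformly over $\Upsilon_{2n+1}$ and over all small $\alpha$. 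Moreover, your expansion is used with unquantified remainders, so even the non-singularity claim is heuristic as written. The argument therefore does not reach the conclusion.

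The idea you are missing is the paper's reduction to the three-parameter case, which bypasses all determinant asymptotics. For a positive vector $x\in\R^{2n+1}$, the paper writes the $k$-th entry of $\nabla_\theta\brho_{\theta,2n+1}(\alpha)\,x$ as a sum over $j=1,\dots,n$ of three-term blocks, the $j$-th block being a row of the $3\times3$ matrix $\nabla_{\lambda,H_j,\sigma_j}\bigl(\sigma_j^2\rho_{\lambda,H_j}\bigr)$ applied to the subvector $(x_1,x_{j+1},x_{n+j+1})'$. Lemma \ref{lmm: clarified} (the $n=1$ case) makes each such $3\times3$ matrix a P-matrix for small step size --- this is exactly where $\lambda<\exp\bigl(\Psi(2H_{\inf}+1)\bigr)$ enters, since monotonicity of $\exp(\Psi(\cdot))$ gives $\lambda<\exp\bigl(\Psi(2H_j+1)\bigr)$ for every $j$ --- and the sign property of P-matrices from Gale--Nikaido is applied blockwise; the blocks are then summed, with a single uniform step-size threshold $\alpha\le\tilde\alpha^*_{2n+1}/(2n+1)$. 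Your side observation that injectivity forces the $H_k$ to be pairwise distinct is in fact a fair criticism of the statement itself: the autocovariance is invariant under permuting the pairs $(H_k,\sigma_k)$, and when $H_i=H_j$ only $\sigma_i^2+\sigma_j^2$ is identifiable, which makes the Jacobian singular; the rectangle $\Upsilon_{2n+1}$ imposes no ordering, so a restriction like yours is needed for the claim to be literally true. But noting this does not repair your proof: the missing P-matrix verification remains, and it is the step the paper resolves by its blockwise use of Lemma \ref{lmm: clarified} rather than by any Vandermonde computation.
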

\begin{proof}
	Let $\theta\in\Upsilon_{2n+1}$, and $x=(x_1,\ldots,x_{2n+1})'>0$ is a positive vector in $\R^{2n+1}$. Then, the $k$th component of the vector $\nabla_\theta\brho_{\theta,2n+1}(\alpha)\,x$ is
	\begin{align}\label{eq: VecProd}
		&\Big[\nabla_\theta\brho_{\theta,2n+1}(\alpha)\,x\Big]_k = \sum_{j=1}^{2n+1}\frac{\p\rho_{\lambda,n}}{\p\theta_j}((k-1)\alpha)x_j\\ \nonumber
		&= \frac{\p\rho_{\lambda,n}}{\p\lambda}((k-1)\alpha)\,x_1
		+ \sum_{j=1}^n\frac{\p\rho_{\lambda,n}}{\p H_j}((k-1)\alpha)\,x_{j+1} \\ \nonumber
		&+ \sum_{j=1}^n\frac{\p\rho_{\lambda,n}}{\p \sigma_j}((k-1)\alpha)\,x_{n+j+1}\notag\\
		\nonumber
		&= \sum_{j=1}^n\sigma_j^2\frac{\p\rho_{\lambda,H_j}}{\p\lambda}((k-1)\alpha)\,x_1
		+ \sum_{j=1}^n\sigma_j^2\frac{\p\rho_{\lambda,H_j}}{\p H_j}((k-1)\alpha)\,x_{j+1} \\ \nonumber
		&+ \sum_{j=1}^n2\sigma_j\frac{\p\rho_{\lambda,H_j}}{\p \sigma_j}((k-1)\alpha)\,x_{n+j+1}\notag\\ \nonumber
		&= \sum_{j=1}^n\bigg[\sigma_j^2\frac{\p\rho_{\lambda,H_j}}{\p\lambda}((k-1)\alpha)\,x_1
		+ \sigma_j^2\frac{\p\rho_{\lambda,H_j}}{\p H_j}((k-1)\alpha)\,x_{j+1} \\  \nonumber
		&+ 2\sigma_j\frac{\p\rho_{\lambda,H_j}}{\p \sigma_j}((k-1)\alpha)\,x_{n+j+1}\bigg].\notag
	\end{align}
Now, for all $j=1,\ldots,2n+1$, let $\tilde\rho_j(t)=\sigma_j^2\rho_{\lambda,H_j}(t)$ and 
\begin{equation*}
	\tilde\alpha = \bigg\lb
	\begin{array}{lll}
		(k-1)\alpha &;& 1\le k\le n,\\
		(k-1)\alpha/2 &;& n+1\le k\le 2n+1.
	\end{array}
\end{equation*}
Then, since $\exp(\Psi(\cdot))$ is an strictly increasing function,
 $$\lambda<\exp\big(\Psi(2H_{\inf}+1)\big)=\min_{1\le i\le 2n+1}\exp\big(\Psi(2H_i+1)\big)\le\exp\big(\Psi(2H_j+1)\big),$$ 
 and so by Lemma \ref{lmm: clarified} the matrix
\begin{equation}\label{eq: SubGrad}
	\nabla_{_{\lambda,H_j,\sigma_j}}\tilde\rho_j(\tilde\alpha) = 
	\left[
	\begin{array}{*3{>{\displaystyle}c}p{5cm}}
		\frac{\p\tilde\rho_j}{\p\lambda}(0) & \frac{\p\tilde\rho_j}{\p H_j}(0) & \frac{\p\tilde\rho_j}{\p\sigma_j}(0)\\
		 & & \\
		\frac{\p\tilde\rho_j}{\p\lambda}(\tilde\alpha) & \frac{\p\tilde\rho_j}{\p H_j}(\tilde\alpha) & \frac{\p\tilde\rho_j}{\p\sigma_j}(\tilde\alpha)\\
		 & & \\
		\frac{\p\tilde\rho_j}{\p\lambda}(2\tilde\alpha) & \frac{\p\tilde\rho_j}{\p H_j}(2\tilde\alpha) & \frac{\p\tilde\rho_j}{\p\sigma_j}(2\tilde\alpha)\\
	\end{array}
	\right]
\end{equation}
is a P-matrix for all $\tilde\alpha\le\tilde\alpha_{kj}$ sufficiently small associated to $k$ and $\tilde\rho_j$. So, \eqref{eq: SubGrad} is also P-matrix for
 $$\tilde\alpha\le\tilde\alpha^*_{2n+1} := \min_{
 	\begin{matrix}
 		{\scriptstyle 1\le k\le 2n+1}\\
 		{\scriptstyle 1\le j\le 2n+1}
 	\end{matrix}
 }\tilde\alpha_{kj},$$ 
 for all $j=1,\ldots,2n+1$.\\
 Hence, by the Theorem 2 of \cite{gale1965jacobian},
 for the subvector $\tilde x_j = (x_1,x_{j+1},x_{n+j+1})'$ we have
\begin{equation*}
	\nabla_{_{\lambda,H_j,\sigma_j}}\tilde\rho_j(\tilde\alpha)\cdot\tilde x_j>0,
\end{equation*}
and so for $\theta\in\Upsilon_{2n+1}$
\begin{align*}
&\sigma_j^2\frac{\p\rho_{\lambda,H_j}}{\p\lambda}((k-1)\alpha)\,x_1
+ \sigma_j^2\frac{\p\rho_{\lambda,H_j}}{\p H_j}((k-1)\alpha)\,x_{j+1} \\
&+ 2\sigma_j\frac{\p\rho_{\lambda,H_j}}{\p \sigma_j}((k-1)\alpha)\,x_{n+j+1}>0.
\end{align*}
Consequently, for
\begin{equation}\label{eq: StepLimit}
	\alpha\le\frac{\tilde\alpha^*_{2n+1}}{2n+1},
\end{equation}
  \eqref{eq: VecProd} is positive for all $k=1,\ldots,2n+1$, and so the vector
$\nabla_\theta\brho_{\theta,2n+1}(\alpha)\,x$
is positive. Hence, by the Theorem 2 of \cite{gale1965jacobian}, the gradiant
$\nabla_\theta\brho_{\theta,2n+1}(\alpha)$
is a P-matrix, and so the mapping \eqref{eq: Mapping} is injective on $\Upsilon_{2n+1}$.
\end{proof}

Now, we can obtain the strong consistency for the GMM estimators for the mmfOU processes.

\begin{thm}\label{crl 3.2}
For the GMM parameter estimator \eqref{estimator} for the mmfOU process $U^n$, we have
$$
\hat{\theta}_N=(\hat{\lambda}_N;\hat{\bH}_N;\hat{\bsigma}_N)'\overset{a.s.}{\longrightarrow}\theta_0=(\lambda_0;\bH_0;\bsigma_0).
$$
\end{thm}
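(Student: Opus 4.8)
The plan is to verify the four hypotheses of the standard extremum-estimator consistency theorem (Newey and McFadden \cite{newey1994large}, Theorem~2.1, as applied in Barboza and Viens \cite{Barboza-Viens-2017}): compactness of the parameter set, continuity of the limiting objective $Q_0$, identification of a unique minimizer, and uniform almost-sure convergence $\sup_{\theta\in\Theta}|\hat{Q}_N(\theta)-Q_0(\theta)|\to 0$. Compactness is available since $\Theta$ is closed and we may restrict attention to the compact rectangle $\Upsilon_{2n+1}$, and continuity of $Q_0(\theta)=\bg_0(\theta)'A\bg_0(\theta)$ follows from the smoothness of $\theta\mapsto\rho_\theta(\cdot)$ in \eqref{eq: rho_analitical}.

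The observation that makes the convergence step easy is that all randomness in $\hat{\bg}_N(\theta)$ is carried by a single $\theta$-independent statistic. Since $g_\ell(t_i,\theta)=\varphi_\ell(t_i)^2-V_\ell(\theta)$, writing $R_N:=\big(\tfrac{1}{N-L+1}\sum_{i=L}^N\varphi_\ell(t_i)^2\big)_{\ell=1}^L$ and $\mathbf{V}(\theta)=(V_1(\theta),\dots,V_L(\theta))'$ gives $\hat{\bg}_N(\theta)=R_N-\mathbf{V}(\theta)$ with $\mathbf{V}$ deterministic. Because $U^n$ is stationary and ergodic (its autocovariance \eqref{eq: rho_analitical} is continuous; see \cite{grenander1950stochastic}), each filtered process $\varphi_\ell$, hence $\varphi_\ell^2\in L^1$, is stationary and ergodic, and Birkhoff's ergodic theorem yields $R_N\to(\E[\varphi_\ell(t)^2])_\ell=\mathbf{V}(\theta_0)$ almost surely. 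Setting $e_N:=R_N-\mathbf{V}(\theta_0)$ and $d(\theta):=\mathbf{V}(\theta_0)-\mathbf{V}(\theta)$, a direct expansion gives $\hat{Q}_N(\theta)-Q_0(\theta)=2\,e_N'A\,d(\theta)+e_N'A\,e_N$, so that $\sup_{\theta\in\Theta}|\hat{Q}_N(\theta)-Q_0(\theta)|\le 2\|e_N\|\,\|A\|\sup_{\theta\in\Theta}\|d(\theta)\|+\|e_N\|^2\|A\|\to 0$ a.s., using continuity of $\mathbf{V}$ on the compact $\Theta$. No separate uniform law of large numbers is required.

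It remains to establish identification: $Q_0(\theta)=0$ if and only if $\theta=\theta_0$. As $A$ is symmetric positive definite, $Q_0(\theta)=0$ iff $\bg_0(\theta)=\mathbf{V}(\theta_0)-\mathbf{V}(\theta)=B\big(\brho_{\theta_0,L+1}(\alpha)-\brho_{\theta,L+1}(\alpha)\big)=0$, where $\brho_{\theta,L+1}(\alpha)=(\rho_\theta(0),\dots,\rho_\theta(L\alpha))'$. I would then invoke Theorem~\ref{thm: cncty}: on $\Upsilon_{2n+1}$ and for $\alpha$ sufficiently small the map $\theta\mapsto\brho_{\theta,2n+1}(\alpha)$ is injective, so matching the autocovariances at the first $2n+1$ lags already forces $\theta=\theta_0$. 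The delicate point, and the one I expect to be the main obstacle, is to pass from $B\big(\brho_{\theta_0,L+1}-\brho_{\theta,L+1}\big)=0$ to the equality of those $2n+1$ lag values; here one must exploit the rank condition $2n+1\le\mathrm{rank}(B)\le L$ on the filter matrix, which guarantees that the $L$ filtered equations encode at least $2n+1$ independent constraints on the autocovariance vector, together with the analyticity of $\rho_\theta$ in $t$ to exclude spurious coincidences of the difference within $\ker(B)$.

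With compactness, continuity, uniform almost-sure convergence, and a uniquely identified minimizer in hand, the conclusion follows by the classical argument for extremum estimators. For any neighbourhood $\mathcal{U}$ of $\theta_0$, continuity and identification on the compact $\Theta$ give $\delta:=\inf_{\theta\notin\mathcal{U}}Q_0(\theta)>0=Q_0(\theta_0)$. Since $\hat{\theta}_N=\arg\min_\theta\hat{Q}_N(\theta)$, we have $\hat{Q}_N(\hat{\theta}_N)\le\hat{Q}_N(\theta_0)\to 0$ a.s., while $Q_0(\hat{\theta}_N)\le\hat{Q}_N(\hat{\theta}_N)+\sup_{\theta}|\hat{Q}_N-Q_0|\to 0$ a.s. If $\hat{\theta}_N$ stayed outside $\mathcal{U}$ along a subsequence we would get $Q_0(\hat{\theta}_N)\ge\delta>0$, a contradiction; hence eventually $\hat{\theta}_N\in\mathcal{U}$ almost surely, and letting $\mathcal{U}$ shrink yields $\hat{\theta}_N\overset{a.s.}{\longrightarrow}\theta_0$.
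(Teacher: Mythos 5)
Your overall strategy --- re-deriving the extremum-estimator consistency argument (compactness, continuity of $Q_0$, identification, uniform a.s.\ convergence, then the standard contradiction argument) rather than citing it --- is sound, and it is genuinely more self-contained than the paper's proof, which is a one-line observation that Theorem~\ref{thm: cncty} verifies Assumption~2.1 of \cite{Barboza-Viens-2017} and then invokes their Theorem~2.1 as a black box. Your decomposition $\hat{\bg}_N(\theta)=R_N-\mathbf{V}(\theta)$, with all randomness carried by the $\theta$-independent statistic $R_N$, is a clean observation: it reduces uniform almost-sure convergence of $\hat{Q}_N$ to a single application of Birkhoff's theorem plus continuity of $\mathbf{V}$ on a compact set, with no uniform law of large numbers needed (modulo the routine remark that the discretely sampled sequence inherits ergodicity from $U^n$, e.g.\ because the autocovariance \eqref{eq: rho_analitical} vanishes at infinity).

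However, the identification step is a genuine gap, and you flagged it yourself without closing it. You need: $B\bigl(\brho_{\theta_0,L+1}(\alpha)-\brho_{\theta,L+1}(\alpha)\bigr)=0$ implies $\theta=\theta_0$. The rank condition $2n+1\le\mathrm{rank}(B)\le L$ cannot deliver this on its own: $B$ is $L\times(L+1)$, so $\ker(B)$ is at least one-dimensional, and nothing in a rank count prevents the difference vector from lying in $\ker(B)$ for some $\theta\ne\theta_0$; a dimension count ($(2n+1)$-dimensional family of differences versus a kernel of dimension $L+1-\mathrm{rank}(B)$ inside $\R^{L+1}$) only suggests the intersection is generically trivial, it does not prove it for the given $B$ and the given parametric family. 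The appeal to ``analyticity of $\rho_\theta$ in $t$'' does not help either: $\alpha$ is fixed, so the obstruction concerns a finite linear system, not coincidences along a continuum of lags. Two ways to close the gap: (a) do what the paper does and cite Theorem~2.1 of \cite{Barboza-Viens-2017}, whose identification hypothesis is exactly what Theorem~\ref{thm: cncty} supplies in their framework; or (b) exploit the concrete structure of the filter family rather than only its rank --- for instance, if the family contains the order-$0$ filter together with finite-difference filters of orders $1,\ldots,L-1$ (each supported on $\{0,\ldots,l\}$), then $B$ is triangular in the lag index with nonzero ``diagonal'' entries $2a_la_0$, so $B\Delta=0$ forces $\Delta_0=\cdots=\Delta_{L-1}=0$, and since $L-1\ge 2n$ the injectivity of $\theta\mapsto\brho_{\theta,2n+1}(\alpha)$ from Theorem~\ref{thm: cncty} then yields $\theta=\theta_0$. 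Without one of these, your argument does not go through as written.
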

\begin{proof}
As an straightforward result of Theorem \ref{thm: cncty}, Assumption 2.1 of \cite{Barboza-Viens-2017} is valid and we can apply Theorem 2.1 in \cite{Barboza-Viens-2017}.
\end{proof}
\begin{rem}\label{rem: consist_constrains}
	First, we note that for $\lambda\in[0,\e^{\Psi(1)}]$ the consistency that we studied above is valid. Second, the proof of Theorem \ref{thm: cncty} returns an important result as the nature of numerical estimation. By \eqref{eq: StepLimit}, the larger the number of parameters we aim to estimate, the smaller the sufficient step $\alpha>0$ we need. However, in any case, there exists some sufficient $\alpha$ for which the consistency is garanteed.
\end{rem}
\section{Asymptotic Normality}\label{sect:normality}
For $\theta=(\lambda ;\bH ;\bsigma)'\in\R^{2n+1}$, the spectral density function (SDF) of $U^n$ is
\begin{equation}\label{eq: SDF_mmfOUn}
	f_{\theta}(x) = f(\theta;x) = \sum_{i=1}^n\sigma_i^2 f_{\lambda,H_i}(x),
\end{equation}
where
\begin{equation}\label{fou-sd}
f_{\lambda,H_i}(x)
=
\frac{\sin(\pi H_i)\Gamma(1+2H_i)}{2\pi}\, \frac{|x|^{1-2H_i}}{x^2+ \lambda^2},
\end{equation}
is the SDF of the fOU process $U^{\lambda,H_i}$ (see \cite{almani_sottinen_2023}).
Denote
\begin{equation*}
\bar{f}_\theta(x):=\sum_{p\in\Z}f_\theta\left(x+\frac{2p\pi}{\alpha}\right)\,;\quad x\in\left[-\frac{\pi}{\alpha},\frac{\pi}{\alpha}\right],
\end{equation*}
and for $\l_i,\l_j\in\lb 1,\ldots,2n+1\rb$
\begin{equation*}
R_\theta(x|i,j):=\min(1,|x|^{l_i+l_j})\bar{f}_\theta(x).
\end{equation*}
Then, as explained in \cite{Barboza-Viens-2017}, the GMM estimator has asymptotic normality if $R_{\theta_0}(\cdot|i,j)\in L^2(-\pi,\pi)$. For the process $U^n$ we have
\begin{eqnarray*}
R_\theta(x|i,j)
&=&\min(1,|x|^{l_i+l_j})\overline{f}_{\theta}(x)\notag\\
&=&\sum_{k=1}^n\sigma_k^2 R_{\lambda,H_k}(x|i,j),
\end{eqnarray*}
where
\begin{gather*}
	R_{\lambda,H_k}(x|i,j):=\min(1,|x|^{l_i+l_j})\overline{f}_{\lambda,H_k}(x),\\
	\bar{f}_{\lambda,H_k}(x):=\sum_{p\in\Z}f_{\lambda,H_k}\left(x+\frac{2p\pi}{\alpha}\right)\,;\quad x\in\left[-\frac{\pi}{\alpha},\frac{\pi}{\alpha}\right].
\end{gather*}
So, $R_\theta(\cdot|i,j)\in L^2(-\pi,\pi)$ if and only if $R_{\lambda,H_k}(\cdot|i,j)\in L^2(-\pi,\pi)$ for all $k=1,\ldots,n$. The following Lemma is a generalization of  Lemma 3.1 in \cite{Barboza-Viens-2017}.


\begin{lmm}\label{lmm 3.3}
For the $U^n$ process $R_{\theta_0}(\cdot|i,j)\in L^2(-\pi,\pi)$ for
\begin{itemize}
\item[ (i)] $l_i+l_j\geq 1$, if all $H_k\in(0,1)$,
\item[(ii)] $l_i+l_j=0$, if all $H_k\in(0,\frac{3}{4})$, i.e. $H_{\sup}\le 3/4$,
\end{itemize}
where $n\ge 1$ ($L\geq 3$).
\end{lmm}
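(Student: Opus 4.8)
The plan is to exploit the reduction recorded just above the statement: since $f_{\lambda,H}\ge 0$ and hence $\overline{f}_{\lambda,H}\ge 0$, the nonnegative summands give $\sigma_k^2 R_{\lambda,H_k}\le R_{\theta_0}$ pointwise for each $k$, while $R_{\theta_0}=\sum_k\sigma_k^2 R_{\lambda,H_k}$ is a finite sum whose $L^2$-norm the triangle inequality bounds by $\sum_k\sigma_k^2\|R_{\lambda,H_k}\|$; consequently $R_{\theta_0}(\cdot|i,j)\in L^2(-\pi,\pi)$ if and only if each $R_{\lambda,H_k}(\cdot|i,j)\in L^2(-\pi,\pi)$. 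It therefore suffices to fix one index, write $\lambda=\lambda_0$ and $H=H_k$, and decide when
\[
\int_{-\pi}^{\pi}\min\!\big(1,|x|^{l_i+l_j}\big)^2\,\overline{f}_{\lambda,H}(x)^2\,\d x<\infty.
\]

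First I would separate the periodization into its central term and its tail, $\overline{f}_{\lambda,H}(x)=f_{\lambda,H}(x)+\sum_{p\ne 0}f_{\lambda,H}(x+2p\pi/\alpha)$. Reading off from \eqref{fou-sd} the decay $f_{\lambda,H}(y)\asymp |y|^{-1-2H}$ as $|y|\to\infty$, the tail series converges uniformly on $[-\pi,\pi]$ to a continuous, hence bounded, function; here one uses that $\alpha$ is small enough that $[-\pi,\pi]\subset[-\pi/\alpha,\pi/\alpha]$ and that the shifted singular points $x=-2p\pi/\alpha$ with $p\ne 0$ all fall outside $(-\pi,\pi)$. Thus the only possible failure of square-integrability on $(-\pi,\pi)$ comes from $f_{\lambda,H}$ near the origin, where \eqref{fou-sd} yields $f_{\lambda,H}(x)\sim \frac{\sin(\pi H)\Gamma(1+2H)}{2\pi\lambda^2}\,|x|^{1-2H}$, so that $R_{\lambda,H}(x|i,j)\asymp \min(1,|x|^{l_i+l_j})\,|x|^{1-2H}$ as $x\to 0$.

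What remains is a power count at $x=0$, using $\int_0^\pi |x|^{2\beta}\,\d x<\infty\iff \beta>-\tfrac12$. On a neighbourhood of the origin $\min(1,|x|^{l_i+l_j})=|x|^{l_i+l_j}$, so the relevant exponent is $\beta=l_i+l_j+1-2H$. For part (i), $l_i+l_j\ge 1$ together with $H<1$ gives $\beta\ge 2-2H>0>-\tfrac12$, so integrability holds for every $H\in(0,1)$. For part (ii), $l_i+l_j=0$ forces $\min(1,|x|^0)=1$ and hence $\beta=1-2H$, so $\beta>-\tfrac12$ is exactly $H<\tfrac34$; the endpoint $H=\tfrac34$ produces the borderline density $|x|^{-1}$ and fails, which is why the strict bound $H_{\sup}\le 3/4$ appears.

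The genuinely technical step is the uniform convergence and boundedness of the periodized tail on $[-\pi,\pi]$, i.e. controlling $\sum_{p\ne 0}f_{\lambda,H}(x+2p\pi/\alpha)$ via the algebraic decay of \eqref{fou-sd}; once that is established the conclusion is merely the comparison of $l_i+l_j+1-2H$ with $-\tfrac12$, applied over the finitely many indices $k$.
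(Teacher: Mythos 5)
Your proposal is correct, and its reduction step is exactly the paper's: the paper likewise decomposes $R_{\theta}(x|i,j)=\sum_{k=1}^n\sigma_k^2 R_{\lambda,H_k}(x|i,j)$ and notes that membership in $L^2(-\pi,\pi)$ holds for the mixture if and only if it holds for every component. Where you diverge is in how the single-component claim is settled: the paper offers no further argument, simply presenting the lemma as ``a generalization of Lemma 3.1 in Barboza--Viens (2017)'' and outsourcing the fOU case to that citation, whereas you prove it from first principles --- splitting the periodization $\bar{f}_{\lambda,H}$ into the central term plus a tail that the decay $f_{\lambda,H}(y)\asymp|y|^{-1-2H}$ makes uniformly bounded on $[-\pi,\pi]$ (for $\alpha$ small enough that the shifted singularities $-2p\pi/\alpha$, $p\ne 0$, lie outside the interval), and then power-counting the exponent $\beta=l_i+l_j+1-2H$ against the threshold $-\tfrac12$. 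This buys two things the paper's treatment lacks: the argument is self-contained (no reliance on the external lemma), and it explains \emph{why} the threshold $3/4$ appears in part (ii), since at $H=\tfrac34$ the squared density behaves like $|x|^{-1}$ near the origin and integrability fails exactly at the endpoint --- which also implicitly corrects the paper's slightly loose phrasing ``$H_{\sup}\le 3/4$'' (for finitely many components the correct reading is the strict bound $H_k<\tfrac34$ for all $k$, as your computation shows). Your one genuinely technical step, the Weierstrass-type bound $\sum_{p\ne 0}f_{\lambda,H}(x+2p\pi/\alpha)\le C\sum_{p\ne 0}\big((2|p|-1)\pi\big)^{-1-2H}<\infty$, is sound because $1+2H>1$, so the proof is complete as sketched.
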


Now, for $\theta\in\Theta$ we define
\begin{align*}
&\hat{G}_N(\theta):=\nabla_\theta\hat{\bg}_N(\theta),\\
&G(\theta):=\E[\nabla_\theta\bg(\cdot,\theta)],
\end{align*}
and note that $G(\theta)$ does not depend on time $t$ because of the stationarity of $U^n$. 

Finally, we have the following theorem for asymptotic normality of the GMM estimator of $U^n$.

\begin{thm}
The GMM estimator \eqref{estimator} for the mmfOU processes $U^n$ satisfies
\begin{itemize}
\item[(i)] $\E[\Vert\hat{\theta}_N-\theta_0\Vert^2]=O(N^{-1})$,
\item[(ii)] $N^c\Vert\hat{\theta}_N-\theta_0\Vert\overset{a.s.}{\longrightarrow}0$ for $c<\frac12$,
\item[(iii)] $\sqrt{N}(\hat{\theta}_N-\theta_0)\overset{d}{\longrightarrow}\mathcal{N}\left(0,C(\theta_0)\Lambda C(\theta_0)'\right)$,
\end{itemize}
where $C(\theta_0)=[G(\theta_0)'AG(\theta_0)]^{-1}G(\theta_0)'A$, and $\Lambda$ is a $L\times L$ matrix (for $L\ge 2n+1$) such that
\begin{gather*}
\Lambda_{ij}=2\sum_{p\in\Z}\left[\sum_{k=0}^L b_k^{i,j}\rho_{\theta_0}\left(\alpha\cdot(k+p)\right)\right]^2,\\
b_k^{i,j}=\sum_{q=0}^{L-k}a_q^i a_{q+k}^j + \sum_{q=0}^{L-k}a_q^j a_{q+k}^i.
\end{gather*}
\end{thm}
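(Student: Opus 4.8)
The plan is to follow the standard GMM route of Newey and McFadden \cite{newey1994large} as adapted to the fractional setting in \cite{Barboza-Viens-2017}, reducing the limit law of $\hat{\theta}_N-\theta_0$ to a central limit theorem for the empirical moment vector $\hat{\bg}_N(\theta_0)$. Since consistency is already in hand from Theorem~\ref{crl 3.2} and $\theta_0$ lies in the interior of $\Upsilon_{2n+1}$, I would first expand the first-order optimality condition $\hat{G}_N(\hat{\theta}_N)'A\,\hat{\bg}_N(\hat{\theta}_N)=0$ by a mean-value expansion of $\hat{\bg}_N$ about $\theta_0$. Writing $\hat{\bg}_N(\theta_0+h)=\hat{\bg}_N(\theta_0)+\hat{G}_N(\bar{\theta})h$ and solving for $h=\hat{\theta}_N-\theta_0$ yields
\[
\sqrt{N}(\hat{\theta}_N-\theta_0) = -\big[\hat{G}_N(\hat{\theta}_N)'A\,\hat{G}_N(\bar{\theta})\big]^{-1}\hat{G}_N(\hat{\theta}_N)'A\;\sqrt{N}\,\hat{\bg}_N(\theta_0).
\]
Consistency forces $\bar{\theta}\to\theta_0$ and, by a uniform law of large numbers for the Jacobian, $\hat{G}_N(\hat{\theta}_N),\hat{G}_N(\bar{\theta})\to G(\theta_0)$, so the prefactor converges to $C(\theta_0)=[G(\theta_0)'AG(\theta_0)]^{-1}G(\theta_0)'A$. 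The invertibility of $G(\theta_0)'AG(\theta_0)$ is guaranteed because $G(\theta_0)$ inherits full column rank from the non-singularity (P-matrix property) of $\nabla_\theta\brho_{\theta_0,2n+1}(\alpha)$ established in Theorem~\ref{thm: cncty}. By Slutsky's theorem the problem thus collapses to proving $\sqrt{N}\,\hat{\bg}_N(\theta_0)\overset{d}{\to}\mathcal{N}(0,\Lambda)$.

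The central limit theorem for $\hat{\bg}_N(\theta_0)$ is where the real content lies. Each coordinate $g_\ell(t_i,\theta_0)=\varphi_\ell(t_i)^2-V_\ell(\theta_0)$ is a centered element of the second Wiener chaos, since $\varphi_\ell$ is a finite linear filter of the stationary Gaussian process $U^n$. I would therefore invoke the fourth-moment/Breuer--Major machinery \cite{nourdin2012normal}: for sequences in a fixed chaos, convergence to a Gaussian is equivalent to convergence of the variance together with the vanishing of the fourth cumulant, and the latter reduces to summability of the contraction norms, i.e. to square-summability of the lag-covariances of $\varphi_i,\varphi_j$. That summability is exactly the statement $R_{\theta_0}(\cdot\mid i,j)\in L^2(-\pi,\pi)$ furnished by Lemma~\ref{lmm 3.3}, through the Parseval identification of the folded spectral density $\bar{f}_{\theta_0}$ with the autocovariances of the process sampled at step $\alpha$. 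This $L^2$ membership simultaneously guarantees finiteness of the limiting variance and kills the fourth cumulant, delivering the CLT.

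It then remains to identify the limiting covariance $\Lambda$. Because we are in the second chaos, the Gaussian (Wick/Isserlis) formula gives $\mathrm{Cov}(\varphi_i(s)^2,\varphi_j(t)^2)=2\,[\mathrm{Cov}(\varphi_i(s),\varphi_j(t))]^2$, and the inner covariance, expressed through the filters, is a lag-shifted combination $\sum_{k=0}^L b_k^{i,j}\rho_{\theta_0}(\alpha k)$; summing over lags and passing to the limit produces exactly
\[
\Lambda_{ij}=2\sum_{p\in\Z}\Big[\sum_{k=0}^L b_k^{i,j}\rho_{\theta_0}\big(\alpha(k+p)\big)\Big]^2,
\]
with $b_k^{i,j}=\sum_{q=0}^{L-k}\big(a_q^i a_{q+k}^j+a_q^j a_{q+k}^i\big)$. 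Finally, parts (i) and (ii) are moment and almost-sure refinements rather than new phenomena: hypercontractivity in the second chaos upgrades the variance control to $\E[\Vert\hat{\theta}_N-\theta_0\Vert^2]=O(N^{-1})$, and combining the resulting higher-moment bounds with a Borel--Cantelli argument yields $N^c\Vert\hat{\theta}_N-\theta_0\Vert\to 0$ almost surely for every $c<\tfrac12$, precisely as in \cite{Barboza-Viens-2017}.

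The main obstacle I anticipate is not any isolated step but the bookkeeping that transfers the single-factor fOU spectral estimates to the mixture $U^n$: one must verify that the decomposition $f_\theta=\sum_k\sigma_k^2 f_{\lambda,H_k}$ propagates through the folding $\bar{f}_\theta$ and the weight $\min(1,|x|^{l_i+l_j})$ without losing $L^2$ control. This is exactly the reduction ``$R_\theta(\cdot\mid i,j)\in L^2$ if and only if each $R_{\lambda,H_k}(\cdot\mid i,j)\in L^2$'' recorded before Lemma~\ref{lmm 3.3}; once that reduction and Lemma~\ref{lmm 3.3} are granted, the mixture case is no harder than the single-factor one, and the conclusion $\sqrt{N}(\hat{\theta}_N-\theta_0)\overset{d}{\to}\mathcal{N}(0,C(\theta_0)\Lambda C(\theta_0)')$ follows by assembling the pieces above.
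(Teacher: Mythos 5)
Your proposal is correct and takes essentially the same route as the paper: the paper's proof merely checks that Theorem~\ref{thm: cncty} and Lemma~\ref{lmm 3.3} validate Assumptions 1.2--1.3 of \cite{Barboza-Viens-2017} and then cites their Lemma 2.2 and Theorem 2.3, while you rest on exactly the same two ingredients and additionally unpack the internals of those cited results (the mean-value GMM expansion, the second-chaos fourth-moment CLT with the $L^2$ spectral condition, the Isserlis identification of $\Lambda$, and hypercontractivity plus Borel--Cantelli for (i)--(ii)). There is no substantive difference in approach, only in how much of the Barboza--Viens machinery is reproduced rather than cited.
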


\begin{proof}
Theorem \ref{thm: cncty} and Lemma \ref{lmm 3.3} yield that in fact the Assumptions 1.2-1.3 of \cite{Barboza-Viens-2017} are valid. So, we can apply Lemma 2.2 and Theorem 2.3 of \cite{Barboza-Viens-2017}.
\end{proof}

\section{Simulation}\label{sect:simulation}

In this section, we perform a simulation study to test our GMM estimator for the parameters vector $\theta =(\lambda ;\bH ;\bsigma)$ for $n=3$. The statistical error metrics we use are
\begin{align*}
	\widehat{MSE}&:=\frac{1}{m}\sum_{i=1}^m\Vert\hat{\theta}_{N,i}-\theta\Vert^2,\\
	e(\widehat{Var})&:=\text{maximum eigenvalue of } \widehat{Var}(\theta_N),\\
	\widehat{Bias}^2&:=\left\Vert\frac{1}{m}\sum_{i=1}^m\hat{\theta}_{N,i}-\theta\right\Vert^2,
\end{align*}
where $\widehat{Var}(\theta_N)$ is the empirical covariance matrix of the estimated parameters based on the $m$ replications. For statistically high accuracy (cooking), we setup $m = 500$ replications in our estimations.\\

To have the consistency, as mentioned by Remark \ref{rem: consist_constrains}, we take $\lambda = \exp(\Psi(1))$. For \eqref{eq: StepLimit} to hold we take $N = 200, 600,$ and $1000$ in time interval $[0,1]$. Also, Theorem \ref{thm: cncty} is valid when $\bsigma$ has a fixed sign. So, we choose the positive-sign values $\bsigma = (0.5, 1, 1.5)'$.\\

For asymptotic normality, we are restricted by the Lemma \ref{lmm 3.3}. So, we take filters of positive orders $l = 1,2,\ldots,2n+1$. On the other hand, we are interested to test our estimator for varying Hurst values. Specially, for applications sense, we are interested to include the standard Brownian motion (Bm), the case $H_j = 1/2$ for some $j$. Hence, we take $\bH = (0.3, 0.5, 0.7)'$.\\

In each estimation, we first simulate replications for the $U^n$ process, and then estimate its parameters $\theta =(\lambda ;\bH ;\bsigma)$.
To produce $U^n$, we use iterations of the Langevin equation 
\begin{equation}\label{eq: Langevin_Un}
	\d U^n_t = -\lambda U^n_t\d t + \d\left(\sum_{i=1}^n \sigma_i B^{H_i}_t\right),
\end{equation}
at $N$ points of time interval $[0,1]$ with uniform step-sizes. In each replication, we use the autocovariance function $\rho_{\lambda,n}$ given in \eqref{eq: rho_practical} to estimate the parameters $\hat{\theta}_{N}=(\hat{\lambda}_{N};\hat{\bH}_{N};\hat{\bsigma}_{N})$. To do this, we employ finite difference filters of orders $L = 2n+1$ (number of parameters) with $A=I_{2n+1}$ the identity matrix to minimize the calculation cost (see \cite{Barboza-Viens-2017}).\\

 The resulting values of the statistical error metrics for our simulations are given in Table \ref{tab:Metrics}. As one can check, the method produces reasonably accurate estimations and the errors vanishe while inreasing the time step accuracy by increasing $N$. This fact is nicely observable in Figure \ref{fig:Estimations}, confirming our analytical formula \eqref{eq: StepLimit} in action.

\begin{table}[H]
	\centering
	\begin{tabular}{l|ccc}
		\toprule
		$N$     & 200   & 600   & 1000 \\
		\midrule[1.5pt]
		$\widehat{MSE}$   & \,0.006867\, & \,0.001149\, & \,0.000558\, \\
		\midrule
		$e(\widehat{Var})$ & 0.004257 & 0.000564 & 0.000242 \\
		\midrule
		$\widehat{Bias}^2$ & 0.00082 & 0.000294 & 0.000184 \\
		\bottomrule[1.5pt]
	\end{tabular}%
	\label{tab:Metrics}%
	\caption{Statistical errors for estimations by $m=500$ replications.}
\end{table}%
\begin{figure}[H]
	\includegraphics[scale=0.6]{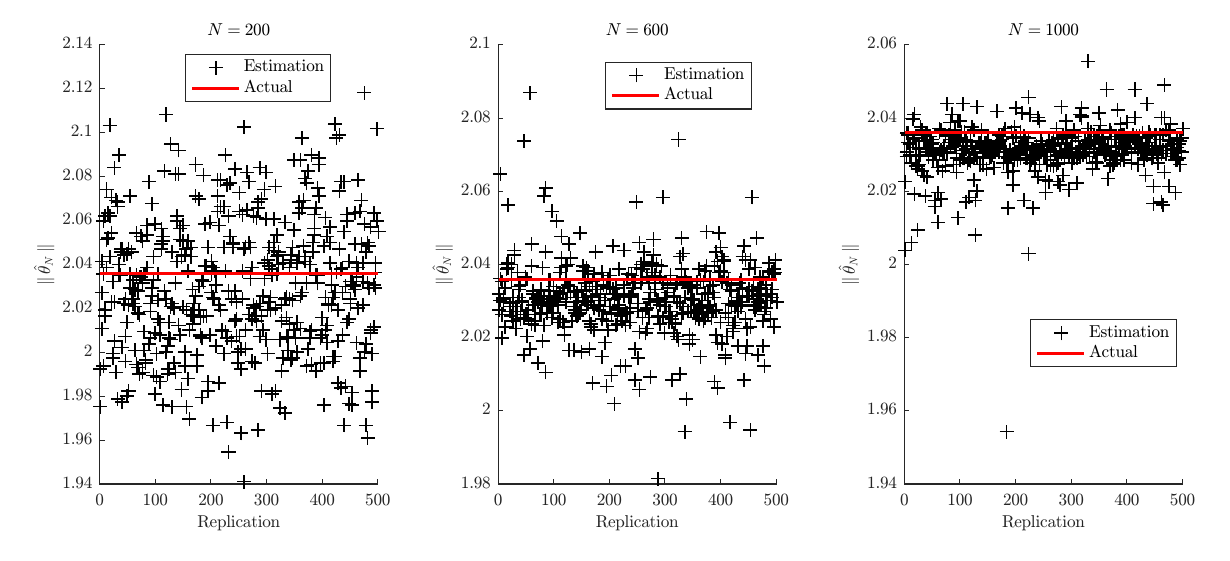}
	\label{fig:Estimations}
	\caption{Estimations of $\theta =(\lambda ;\bH ;\bsigma)$ for $m=500$ replications.}
\end{figure}
\bibliographystyle{siam}
\bibliography{pipliateekki.bib}

\end{document}